\documentclass{amsart}

\usepackage{amsthm, amsfonts, amssymb, amsmath, latexsym, enumerate, times}
\usepackage[latin1]{inputenc}
\usepackage{mathrsfs}
\usepackage{array}
\usepackage{comment}
\usepackage{paralist}
\usepackage{color}

\newtheorem{theorem}{Theorem}
\newtheorem{lemma}[theorem]{Lemma}

\newtheorem{proposition}[theorem]{Proposition}

\theoremstyle{definition}

\newtheorem{remark}[theorem]{Remark}
\newtheorem{example}[theorem]{Example}

 

\renewcommand{\O}{{\mathcal O}}

\newcommand{\Proj}{{\mathbb P}}

\newcommand{\G}{{\mathbb G}}





\def\geq{\geqslant}
\def\leq{\leqslant}

\begin{document}
 
\title[Maximum number of complexes containing subvarieties of Grassmannians]{On the maximum number of complexes of a given degree containing subvarieties of Grassmannians}

\author{Ciro Ciliberto}
\address{Dipartimento di Matematica, Universit\`a di Roma Tor Vergata, Via O. Raimondo
 00173 Roma, Italia}
\email{cilibert@axp.mat.uniroma2.it}
 
\subjclass{Primary 14M15, 14N05; Secondary 14N15}
 
\keywords{Grassmannians, complexes, rational normal varieties, Hilbert function}
 
\maketitle

\medskip

\begin{abstract}  Let $\G(k,r)$ be the Grassmannian of $k$--subspaces in $\Proj^r$ embedded in $\Proj^{N(k,r)}$, with $N(k,r)={{r+1}\choose {k+1}}-1$, via the Pl\"ucker embedding.  In this paper, extending some classical results by Gallarati (see \cite {Gal1,Gal2}), we give a sharp upper bound for the number of independent sections of $H^0(\G(k,r), \O_{\G(k,r)}(m))$ vanishing on a subvariety $X$ of $\G(k,r)$ such that the union of the $k$--subspaces corresponding to the points of $X$ spans $\Proj^r$.
\end{abstract}

\section*{Introduction} Let $\G(k,r)$ be the Grassmannian of $k$--subspaces (i.e., linear subspaces of dimension $k$) in $\Proj^r$ embedded in $\Proj^{N(k,r)}$, with $N(k,r)={{r+1}\choose {k+1}}-1$, via the Pl\"ucker embedding. To avoid trivial cases we will assume $k\neq 0,r-1$. 

For every positive integer $m$ we  consider $H^0(\G(k,r), \O_{\G(k,r)}(m))$ whose dimension we denote by
$\varepsilon_{k,r}(m)$. This is a well known number, computed for instance in \cite [Thm. III, p. 387]{HP}, which is not necessary to make explicit here. Given any non--zero section $s\in H^0(\G(k,r), \O_{\G(k,r)}(m))$, the zero locus scheme $(s)$ of $s$ is called a \emph{$m$--complex} of $\G(k,r)$.

Let now $X$ be an irreducible, projective subvariety of $\G(k,r)$. We can consider the subvariety
$$
Z(X)=\bigcup_{\pi\in X}\pi
$$
of $\Proj^r$. We will say that $X$ is \emph{Grassmann non--degenerate} if $Z(X)$ is non--degenerate in $\Proj^r$, i.e., if $Z(X)$ spans $\Proj^r$.  We will say that $X$ presents the \emph{cone case}  if the subspaces corresponding to the points of $X$ pass through one and the same linear subspace of dimension $k-1$, in which case 
$Z(X)$ is a cone with vertex a linear space of dimension $k-1$. 

Let $\mathcal I_{X,\G(k,r)}$ be the ideal sheaf of $X$ in $\G(k,r)$. We will set 
$$
\theta_X(m):=h^0(\G(k,r),\mathcal I_{X,\G(k,r)}(m))
$$
that is the maximum number of independent $m$--complexes of $\G(k,r)$ containing $X$.

In the two papers \cite {Gal1,Gal2}, D. Gallarati proved the following two results:

\begin{theorem}\label{thm:Gal1} Let $X$ be an irreducible projective  Grassmann non--degenerate curve in $\mathbb G(1,r)$. Then for any positive integer $m$ one has
\begin{equation}\label{eq:Gal1}
\theta_X(m) \leq \varepsilon_{1,r}(m)-m(r-1)-1.
\end{equation}
Moreover:\\
\begin{inparaenum}
\item [(i)] if $m>1$ and $X$ is a rational normal curve of degree $r-1$ (equivalently if $Z(X)$ is a rational normal scroll surface of degree $r-1$), then the equality holds in \eqref{eq:Gal1};\\
\item [(ii)]  if $m=1$ equality holds in \eqref{eq:Gal1} if and only if either $X$ is a rational normal curve of degree $r-1$ or $X$ presents the cone case. 
\end{inparaenum}
\end{theorem}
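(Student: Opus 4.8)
The plan is to identify $\varepsilon_{1,r}(m)-\theta_X(m)$ with the rank of the restriction map
$$
\rho_m\colon H^0(\G(1,r),\O_{\G(1,r)}(m))\longrightarrow H^0(X,\O_X(m)),
$$
which is immediate from the sequence $0\to\I_{X,\G(1,r)}(m)\to\O_{\G(1,r)}(m)\to\O_X(m)\to0$, and then to prove $\operatorname{rk}\rho_m\geq m(r-1)+1$. First I would reduce to a statement about the homogeneous coordinate ring. Let $\langle X\rangle\iso\Proj^n$ be the linear span of $X$ inside $\Proj^{N(1,r)}$ and let $R=\bigoplus_m R_m$ be the homogeneous coordinate ring of $X\subset\Proj^n$. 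Restriction of linear forms gives a surjection onto $H^0(\langle X\rangle,\O(1))$, so the image of $\operatorname{Sym}^mH^0(\O_{\G(1,r)}(1))\to H^0(\O_X(m))$ is exactly $R_m$; since this map factors through $H^0(\O_{\G(1,r)}(m))$, one gets $\operatorname{rk}\rho_m\geq\dim R_m$. It therefore suffices to prove two purely projective facts: (a) $n\geq r-1$; and (b) $\dim R_m\geq mn+1$ for every non-degenerate integral curve $X\subset\Proj^n$.

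For (b) I would argue via the Hilbert function $H_X(m)=\dim R_m$. For general $H$ the hyperplane section $\Gamma=X\cap H$ is a reduced set of $\deg X$ points spanning $H\iso\Proj^{n-1}$: reducedness is Bertini, and spanning follows from $H^1(\Proj^n,\I_X)=0$ — a consequence of $H^0(\O_{\Proj^n})\iso H^0(\O_X)$ (both one-dimensional, $X$ being integral) and $H^1(\O_{\Proj^n})=0$ — applied to $0\to\I_X\to\I_X(1)\to\I_{\Gamma,H}(1)\to0$. A reduced finite scheme is arithmetically Cohen--Macaulay, so a general linear form is a non-zero-divisor on its coordinate ring, whence $H_\Gamma$ is non-decreasing and $H_\Gamma(m)\geq H_\Gamma(1)=n$ for $m\geq1$. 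Since a general linear form $\ell$ is a non-zero-divisor on $R$ and $R/\ell R$ surjects onto the coordinate ring of $\Gamma$,
$$
H_X(m)-H_X(m-1)=\dim(R/\ell R)_m\geq H_\Gamma(m)\geq n,
$$
and summing from $1$ to $m$ yields $H_X(m)\geq mn+1$.

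The heart of the matter, and the step I expect to be the main obstacle, is (a), which I would prove by induction on $r$; the small cases $r\leq3$ are immediate (a line of $\G(1,3)$ is a pencil of lines in a plane, whose union is that plane, so it cannot be Grassmann non-degenerate). If $X$ presents the cone case it lies in the linear subspace $\sigma_p\iso\Proj(V/\langle p\rangle)$ of lines through the vertex $p$ of $Z(X)$, and under this isomorphism $\langle X\rangle$ corresponds to the span of the base curve of the cone $Z(X)$, which is all of $\Proj^{r-1}$, so $n=r-1$. Otherwise two general rulings of $X$ are disjoint — a one-dimensional family of pairwise incident lines is concurrent or coplanar, and coplanarity would make $Z(X)$ degenerate — so for a general $\pi_0\in X$ only finitely many rulings meet $\pi_0$, hence $X$ is not contained in the locus $\sigma_1(\pi_0)$ of lines meeting $\pi_0$, and projection from $\pi_0$ defines a morphism $X\to\G(1,r-2)$ onto a Grassmann non-degenerate curve $X'$ (indeed $Z(X')$ is the projection of $Z(X)$ from $\pi_0$, hence spans $\Proj^{r-2}$). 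This projection is the restriction to $\G(1,r)$ of the linear projection $\Proj(\wedge^2V)\map\Proj(\wedge^2(V/\pi_0))$ with centre the linear space $\Pi_0=\Proj(\pi_0\wedge V)\iso\Proj^{2r-2}$, so $\dim\langle X'\rangle=\dim\langle X\rangle-1-\dim(\langle X\rangle\cap\Pi_0)$. The key observation is that for a general, hence smooth, point $\pi_0\in X$ the embedded tangent line $T_{[\pi_0]}X$ lies in $\Pi_0$: in coordinates with $\pi_0=\langle e_1,e_2\rangle$ and a local frame $\pi(t)=\langle v_1(t),v_2(t)\rangle$, the tangent vector $v_1'\wedge v_2+v_1\wedge v_2'$ lies in $\pi_0\wedge V$. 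Hence $\dim(\langle X\rangle\cap\Pi_0)\geq1$, and the inductive hypothesis gives $n-2\geq\dim\langle X'\rangle\geq(r-2)-1$, i.e. $n\geq r-1$.

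Combining (a) and (b), $\operatorname{rk}\rho_m\geq\dim R_m\geq mn+1\geq m(r-1)+1$, which is \eqref{eq:Gal1}. For the equality statements, the sufficiency parts follow by a squeeze: if $X$ is a rational normal curve of degree $r-1$ then $n=r-1$ and $R$ is the coordinate ring of a rational normal curve, so $\dim R_m=m(r-1)+1$, while also $h^0(X,\O_X(m))=m(r-1)+1$, forcing $\operatorname{rk}\rho_m=m(r-1)+1$; and in the cone case with $m=1$ one has $\operatorname{rk}\rho_1=n+1=r$ by the cone-case analysis in (a). For the necessity in (ii): $m=1$ equality means $\langle X\rangle=\Proj^{r-1}$, so $X\subset\Proj^{r-1}\cap\G(1,r)$. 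If this $\Proj^{r-1}$ lies on $\G(1,r)$ it is a maximal linear subspace, necessarily some $\sigma_p$ (for $r\geq4$ these are the only $(r-1)$-dimensional linear subspaces of $\G(1,r)$; for $r=3$ the other type of plane would put $Z(X)$ in a plane), so $X$ is a cone case. If it does not lie on $\G(1,r)$, one traces equality through the induction — $\dim(\langle X\rangle\cap\Pi_0)=1$, and for general $\pi_0$ the curve $X'$ is extremal in $\G(1,r-2)$ — and, using that projecting a scroll from a general ruling (a curve of self-intersection $0$ on the scroll) lowers the degree by exactly $2$, one finds $\deg Z(X)=\deg Z(X')+2=r-1$, so $Z(X)$ has minimal degree and $X$ is a rational normal curve of degree $r-1$. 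Ruling out the non-minimal cones among the projected curves $X'$ — which would force a directrix of $Z(X)$ to be planar, and then $Z(X)$ itself to be a cone or of minimal degree — is the most delicate point of this last analysis.
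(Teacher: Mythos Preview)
Your argument for the main inequality is correct, but it follows a genuinely different path from the paper's. The paper never bounds $\dim\langle X\rangle$ separately; instead it proves directly that $h_X(m)\geq m(r-1)+1$ via an explicit geometric construction (Lemma~\ref{lem:basic} specialised to $k=1$): for $r-1$ general points $P_1,\dots,P_{r-1}\in X$, pick a general point $p_i$ on each ruling $P_i$, set $\Pi=\langle p_1,\dots,p_{r-1}\rangle$, and observe that the Schubert hyperplane $H_\Pi$ of lines meeting $\Pi$ contains $P_1,\dots,P_{r-1}$ but not $X$. Taking $m$ such hyperplanes yields an $m$--complex through $m(r-1)$ general points of $X$ not containing $X$, whence $h_X(m)-1\geq m(r-1)$. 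Your approach instead factors the problem into the Grassmannian-specific bound $\dim\langle X\rangle\geq r-1$, established by projecting from a general ruling and using that the embedded tangent space $\mathbb T_{[\pi_0]}\G(1,r)$ equals $\Pi_0=\Proj(\pi_0\wedge V)$, plus the standard Castelnuovo bound $h_X(m)\geq m\dim\langle X\rangle+1$ valid for any integral curve. The paper's route is shorter, and its key lemma generalises immediately to $\G(k,r)$; your route makes the span bound explicit (essentially the $m=1$ case) and isolates exactly where the Grassmannian geometry enters.

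For the equality statements the two treatments also diverge. For part~(i) both arguments are the same squeeze on $h_X(m)$ for a rational normal curve. For the necessity in part~(ii) the paper does not give a proof at all---it simply refers to Gallarati \cite{Gal1,Gal2}---whereas you sketch an inductive argument tracing equality through the projection, computing the degree drop of $Z(X)$ under projection from a ruling, and attempting to rule out the possibility that the projected curve $X'$ is a non-minimal cone. That last step is, as you note, delicate and not fully justified in your sketch (for instance the birationality of the projection $Z(X)\dasharrow Z(X')$ and the precise degree drop need care when $Z(X)$ is singular), but you are already going beyond what the paper itself supplies.
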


\begin{theorem}\label{thm:Gal2} Let $X$ be an irreducible projective  Grassmann non--degenerate curve in $\mathbb G(k,r)$. Then 
\begin{equation}\label{eq:Gal2}
\theta_X(1) \leq {{r+1}\choose {k+1}} -r+k-1=\varepsilon_{k,r}(1) -r+k-1.
\end{equation}
Moreover, equality holds in \eqref{eq:Gal2} if and only if either $X$ is a rational normal curve of degree $r-k$ (equivalently $Z(X)$ is a $(k+1)$--dimensional rational normal scroll of degree $r-k$) or $X$ presents the cone case. 
\end{theorem}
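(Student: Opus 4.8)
The plan is to recast the statement as one about the linear span of $X$ in the Pl\"ucker space $\Proj^{N(k,r)}$, and then to argue by induction on $k$, reducing $\G(k,r)$ to $\G(k-1,r-1)$ via a general hyperplane section of $\Proj^r$. As a preliminary \emph{reformulation}: since $\G(k,r)$ is linearly normal in the Pl\"ucker embedding, $\varepsilon_{k,r}(1)=N(k,r)+1$, and the sequence $0\to\mathcal I_{X,\G(k,r)}(1)\to\O_{\G(k,r)}(1)\to\O_X(1)\to 0$ gives $\theta_X(1)=\varepsilon_{k,r}(1)-1-\dim\langle X\rangle$, where $\langle X\rangle\subseteq\Proj^{N(k,r)}$ is the linear span of $X$. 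Thus \eqref{eq:Gal2} is equivalent to $\dim\langle X\rangle\ge r-k$ and equality in \eqref{eq:Gal2} to $\dim\langle X\rangle=r-k$. I would also record the elementary remark that, for a $(k+1)$--dimensional subspace $W$ and an arbitrary subspace $U$ of $\C^{r+1}$, the point $[\wedge^{k+1}W]$ lies in the linear subspace $\Proj(\wedge^{k+1}U)$ precisely when $W\subseteq U$; consequently, if $\langle X\rangle$ is spanned by points $x_0,\dots,x_d\in X$, then already $W_{x_0}+\dots+W_{x_d}=\C^{r+1}$, because $X$ is Grassmann non--degenerate (here $W_x$ is the $(k+1)$--subspace attached to $x\in X$).

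\emph{Inductive step and the inequality.} Fix a general hyperplane $H=\Proj(\ker\ell)\subseteq\Proj^r$. Interior product with $\ell$ is a linear map $\wedge^{k+1}\C^{r+1}\to\wedge^{k}\C^{r+1}$ which, on the Pl\"ucker cone, carries $\wedge^{k+1}W$ to $\pm\wedge^{k}(W\cap\ker\ell)$; it therefore induces a linear projection of $\Proj^{N(k,r)}$ whose restriction to $X$ is a morphism onto a curve $X'\subseteq\G(k-1,r-1)$ with $Z(X')=Z(X)\cap H$. Since $\dim Z(X)=k+1$ and $H$ is general, $Z(X)\cap H$ is irreducible, non--degenerate in $H$, and of dimension $k$, so $X'$ is an irreducible Grassmann non--degenerate curve in $\G(k-1,r-1)$; and as $\langle X'\rangle$ is the image of $\langle X\rangle$ under a linear projection, $\dim\langle X'\rangle\le\dim\langle X\rangle$. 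The case $k=0$ is the tautology that a non--degenerate curve spans its ambient space, so induction on $k$ gives $\dim\langle X\rangle\ge\dim\langle X'\rangle\ge(r-1)-(k-1)=r-k$.

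\emph{The equality case.} Assume $\dim\langle X\rangle=r-k$. Then $\dim\langle X'\rangle=r-k$ as well, so the projection above restricts to a linear isomorphism $\langle X\rangle\to\langle X'\rangle$, whence $X$ is (linearly) isomorphic to $X'$. For $k=1$ the equality statement is Theorem \ref{thm:Gal1}(ii) in the case $m=1$, whose proof rests on the del Pezzo--Bertini classification of varieties of minimal degree applied to the surface $Z(X)\subseteq\Proj^r$; this is the base of the induction. For $k>1$, by induction $X'$ is either a rational normal curve of degree $r-k$ — in which case $X$, linearly isomorphic to it, is a rational normal curve of degree $r-k$ — or presents the cone case, i.e. all the $W_x$ contain a common $(k-2)$--plane $\bar W'$ (the vertex of $X'$, which lies in $H$). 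In the latter case $X$ is contained in the sub--Grassmannian of $k$--planes through $\bar W'$, which is linearly embedded in $\Proj^{N(k,r)}$ and is a copy of $\G(1,r-k+1)$; there $X$ remains Grassmann non--degenerate and still attains equality, so by the case $k=1$ it is a rational normal curve of degree $r-k$ or presents the cone case, and translating back $X$ is a rational normal curve of degree $r-k$ or presents the cone case in $\G(k,r)$. For the converse: a rational normal curve of degree $r-k$ spans a $\Proj^{r-k}$; and if $X$ presents the cone case with vertex a $(k-1)$--plane $W'$, then writing $W_x=W'+\ell_x$ one sees that $X$ is the image of the point set $\{[\ell_x]\}$ under a linear embedding of $\Proj(\C^{r+1}/W')=\Proj^{r-k}$, and Grassmann non--degeneracy forces that point set to be non--degenerate, so $\dim\langle X\rangle=r-k$. (The two alternatives overlap: a cone case curve whose $Z(X)$ has minimal degree is also a rational normal curve of degree $r-k$.)

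I expect the main obstacle to be the equality part rather than the inequality: the only genuinely non--formal input is the base case $k=1$, i.e. Gallarati's Theorem \ref{thm:Gal1}(ii), which uses the classification of minimal--degree varieties. Within the induction the delicate point is the cone sub--case, where one must promote the common $(k-2)$--plane supplied by the inductive hypothesis on $X'$ to a common $(k-1)$--plane for $X$; descending to the linearly embedded $\G(1,r-k+1)$ and reapplying the $k=1$ result is exactly what accomplishes this.
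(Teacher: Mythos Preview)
Your proof is correct, but it follows a genuinely different route from the paper's. The paper establishes the inequality directly: given $r-k$ general points $P_1,\dots,P_{r-k}$ of $X$, it picks general points $p_i\in P_i$, sets $\Pi=\langle p_1,\dots,p_{r-k}\rangle$, and observes that the Schubert hyperplane $H_\Pi$ contains $P_1,\dots,P_{r-k}$ but not $X$ (Lemma~\ref{lem:basic}); this immediately gives $h_X(1)\ge r-k+1$, hence \eqref{eq:Gal2}. For the equality characterization the paper gives no argument at all, simply referring to Gallarati's original papers. Your approach instead runs an induction on $k$ via the interior--product projection $\G(k,r)\dashrightarrow\G(k-1,r-1)$ associated to a general hyperplane of $\Proj^r$, reducing ultimately to the tautological case $k=0$ for the inequality and to Theorem~\ref{thm:Gal1}(ii) for the equality. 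The paper's argument for the inequality is shorter and, more importantly, generalises verbatim to $m>1$ (one takes $m$ Schubert hyperplanes), which is exactly what is needed later for Theorem~\ref{thm:main}; your projection trick does not extend so readily to higher $m$. On the other hand, your treatment of the equality case is essentially self--contained modulo the $k=1$ base, and the reduction in the cone sub--case---passing to the linearly embedded sub--Grassmannian $\G(1,r-k+1)$ of $k$--planes through the common $(k-2)$--plane and reapplying Theorem~\ref{thm:Gal1}(ii)---is a clean way to promote the $(k-2)$--plane supplied by induction to the required $(k-1)$--plane. One small point worth making explicit: for a general hyperplane $H$ the projection is actually a morphism on $X$ and has one--dimensional image even when $X$ presents the cone case (since the vertex $V$ is not contained in $H$, and $W_x\cap H=W_y\cap H$ forces $W_x=W_y$), so the inductive hypothesis always applies to $X'$.
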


In this note we will prove an extension of Gallarati's results. Before stating our  result, let us introduce some notation. Given positive integers $n,k,r,m$ we will set
$$
\theta(n,k,r,m)= \varepsilon_{k,r}(m)-(r-k) \sigma(n,m)- \tau(n,m)
$$
where $\sigma(1,m)=m$, $\sigma(2,m)=\frac {m(m+1)}2$,  whereas if $n\geq 3$ one has
$$
\sigma(n,m)=\sum_{i_{n-2}=1}^m\sum_{i_{n-3}=1}^{i_{n-2}} \cdots \sum_{i_1=1}^{i_2} \frac {i_1(i_1+1)}2,
$$
and 
$$\tau(1,m)=1,\quad  \tau(2,m)=m+1,\quad   \tau(3,m)=\frac {m(m+1)}2+m+1$$ 
whereas if $n\geq 4$ one has
$$
\begin{aligned}
&\tau(n,m)=\sum_{i_{n-3}=1}^m\sum_{i_{n-4}=1}^{i_{n-3}} \cdots \sum_{i_1=1}^{i_2} \frac {i_1(i_1+1)}2+\cr
&+ \sum_{i_{n-4}=1}^m\sum_{i_{n-5}=1}^{i_{n-4}} \cdots \sum_{i_1=1}^{i_2} \frac {i_1(i_1+1)}2+\cdots +  \frac {m(m+1)}2+m+1.
\end{aligned}
$$

One notes that $\theta(1,1,r,m)$ coincides with Gallarati's upper bound in \eqref {eq:Gal1} and 
$\theta(1,k,r,1)$ coincides with the upper bound in  \eqref {eq:Gal2}.
In addition we notice that
\begin{equation}\label{eq:triv}
\sigma(n,m)=\sum_{i=1}^m\sigma(n-1,i) \quad \mbox {and}\quad  \tau(n,m)=\sum_{i=1}^m\tau(n-1,i)+1.
\end{equation}

Moreover, given positive integers $n>1,k,r,m$ we set
$$
\bar\theta(n,k,r,m)= \varepsilon_{k,r}(m)-{{m+n-1}\choose n}(r-k-n)-{{m+n}\choose n}.
$$

Then we can state our  result:

\begin{theorem}\label{thm:main} Let $X$ be an irreducible projective Grassmann non--degenerate variety of dimension $n$  in $\mathbb G(k,r)$. 

(a) If either $n=1$ or $n>1$ and $X$  does not present the cone case, then 
\begin{equation}\label{eq:Gal3}
\theta_X(m) \leq \theta(n,k,r,m).
\end{equation}
Moreover:\\
\begin{inparaenum}
\item [(i)] for $m=1$ equality holds in \eqref{eq:Gal3} if and only if either $X$ is a rational normal 
variety of degree $r-k$ in a subspace of dimension $r-k+n-1$ in $\Proj^{N(k,r)}$ or $n=1$ and $X$ presents the cone case;\\
\item [(ii)] for $n=1$ and $m>1$ equality holds in \eqref{eq:Gal3} if $X$ is a rational normal 
curve of degree $r-k$;\\
\item [(iii)] for  $n>1$ and $m>1$, equality holds in \eqref{eq:Gal3} if and only if $X$ is a rational normal 
variety of degree $r-k$ spanning a subspace of dimension $r-k+n-1$ in $\Proj^{N(k,r)}$. \end{inparaenum}

(b) If $n>1$ and $X$ presents the cone case then 
\begin{equation}\label{eq:Gal4}
\theta_X(m) \leq \bar \theta(n,k,r,m).
\end{equation}
Moreover:\\
\begin{inparaenum}
\item [(iv)] equality holds in \eqref {eq:Gal4} for $m=1$;\\
\item [(v)] if equality holds in \eqref {eq:Gal4} for some $m\geq 2$, then $X$ is a  rational normal 
variety of degree $r-k-n+1$ in a linear subspace of dimension $r-k$ contained in $\G(k,r)$
whose points correspond to $k$--subspaces containing a fixed $(k-1)$--subspace, or, equivalently, $Z(X)$ is a cone with vertex a $(k-1)$--subspace over a variety of dimension $n$ and minimal degree $r-k-n+1$ in a $(r-k)$--linear subspace of $\Proj^r$. 
\end{inparaenum}
\end{theorem}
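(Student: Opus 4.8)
The plan is to remove the Grassmannian from the problem: after passing to the linear span, the statement becomes a lower bound for the Hilbert function of $X$, for which there is a classical Castelnuovo-type estimate, and all the genuinely Grassmannian content gets concentrated into a single lemma on hyperplane sections. \emph{Reformulation.} Since $\G(k,r)$ is projectively normal in $\Proj^{N(k,r)}$, the image of $H^0(\G(k,r),\O_{\G(k,r)}(m))\to H^0(X,\O_X(m))$ has dimension $h_X(m)$, the value at $m$ of the Hilbert function of $X$ inside the linear space $\langle X\rangle=\Proj^s$ it spans; hence $\theta_X(m)=\varepsilon_{k,r}(m)-h_X(m)$, and the whole theorem amounts to lower bounds for $h_X(m)$. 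Using the recursions \eqref{eq:triv} one checks that $\sigma(n,m)=\binom{m+n-1}{n}$ and $\tau(n,m)=\binom{m+n-1}{n-1}$, so that, writing $P_s(m):=(s-n+1)\binom{m+n-1}{n}+\binom{m+n-1}{n-1}$ for the Hilbert function of an $n$-dimensional variety of minimal degree $s-n+1$ in $\Proj^s$, one has
\[
\varepsilon_{k,r}(m)-\theta(n,k,r,m)=P_{r-k+n-1}(m),\qquad \varepsilon_{k,r}(m)-\bar\theta(n,k,r,m)=P_{r-k}(m)
\]
(the second equality being Pascal's identity). Note $P_s(1)=s+1$, and $P_s(m)$ is strictly increasing in $s$ for fixed $n\geq1$, $m\geq1$.

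\emph{Castelnuovo estimate and the cone case.} Next I would recall the classical bound: for any irreducible non-degenerate $n$-dimensional $Y\subseteq\Proj^s$ one has $h_Y(m)\geq P_s(m)$ for all $m$, with equality for some $m\geq2$ — equivalently for all $m$ — if and only if $Y$ has minimal degree, hence is arithmetically Cohen--Macaulay. (Proof: iterate general hyperplane sections down to a set of points in uniform position and use that their Hilbert function grows by at least $1$ until it stabilises.) Part (b) is then immediate: in the cone case $X$ lies in the linearly embedded $\Proj^{r-k}=\{\pi\in\G(k,r):\pi\supseteq\Lambda\}$, and since $Z(X)$ is non-degenerate $X$ spans this $\Proj^{r-k}$, so $s=r-k$ and $\theta_X(m)=\varepsilon_{k,r}(m)-h_X(m)\leq\varepsilon_{k,r}(m)-P_{r-k}(m)=\bar\theta(n,k,r,m)$; for $m=1$ this is an equality (both sides equal $\varepsilon_{k,r}(1)-(r-k+1)$), giving (iv), and for $m\geq2$ equality forces $X$ of minimal degree $r-k-n+1$ in that $\Proj^{r-k}$, which is (v).

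\emph{The span bound in the non-cone case.} The core of (a) is the inequality $s\geq r-k+n-1$ with its equality case, which I would prove by induction on $n$. For $n=1$ this inequality is the one in Theorem \ref{thm:Gal2}, and the equality part of that theorem is the base of the equality analysis. For $n\geq2$ the decisive point is the \textbf{Key Lemma}: for $X$ Grassmann non-degenerate of dimension $\geq2$ not presenting the cone case, a general hyperplane section $X_1=X\cap H$ ($H\subseteq\Proj^{N(k,r)}$ a general hyperplane) is again Grassmann non-degenerate and does not present the cone case. Since $\langle X_1\rangle\subseteq\langle X\rangle\cap H$ has dimension $s-1$, the inductive hypothesis $\dim\langle X_1\rangle\geq r-k+(n-1)-1$ gives $s\geq r-k+n-1$; and if $s=r-k+n-1$ then $\dim\langle X_1\rangle=r-k+n-2$, so $X_1$ is extremal, hence by induction — invoking Theorem \ref{thm:Gal2} when $n-1=1$, whose "cone case" alternative is excluded by the Lemma — a variety of minimal degree $r-k$. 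As the general hyperplane section of $X$ then has minimal degree, so does $X$, spanning $\Proj^{r-k+n-1}$.

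\emph{Conclusion and main obstacle.} Combining the pieces, in the non-cone case $\theta_X(m)=\varepsilon_{k,r}(m)-h_X(m)\leq\varepsilon_{k,r}(m)-P_s(m)\leq\varepsilon_{k,r}(m)-P_{r-k+n-1}(m)=\theta(n,k,r,m)$, which is \eqref{eq:Gal3}. For $m\geq2$, equality forces $s=r-k+n-1$ (strict monotonicity of $P_s$ in $s$) and $h_X(m)=P_s(m)$ (the equality case of the Castelnuovo bound), i.e. $X$ of minimal degree $r-k$ spanning $\Proj^{r-k+n-1}$, which is (iii); for $m=1$ equality forces only $s=r-k+n-1$, and the span-bound analysis gives (i); finally (ii) is the direct computation that a rational normal curve $X$ of degree $r-k$ has $h_X(m)=m(r-k)+1=P_{r-k}(m)|_{n=1}$, its image under $|\O_{\G(k,r)}(m)|$ being a rational normal curve of degree $m(r-k)$. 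The main obstacle is the Key Lemma: its proof must control, via the incidence variety $\{(\pi,p):\pi\in X,\ p\in\pi\}$, how $Z(X)$ and the common intersection $\bigcap_{\pi\in X_1}\pi$ degenerate under sectioning, and must rule out the borderline configurations — most delicate when $n=2$ — using the classification of linear subspaces of $\G(k,r)$ together with the non-degeneracy of $Z(X)$. This single lemma carries all the specifically Grassmannian geometry; the remainder reduces to projective normality, Castelnuovo theory, and Gallarati's theorems.
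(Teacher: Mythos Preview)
Your plan is correct, and in one respect it is cleaner than the paper's: by recognising the closed forms $\sigma(n,m)=\binom{m+n-1}{n}$ and $\tau(n,m)=\binom{m+n-1}{n-1}$ you decouple part (a) into two independent statements, the Castelnuovo inequality $h_X(m)\geq P_s(m)$ (which is exactly the paper's Proposition~\ref{prop:cast}) and the span bound $s\geq r-k+n-1$, each proved separately. The paper never writes down these closed forms and never isolates the span bound; instead it runs the Castelnuovo hyperplane-section recursion $h_X(m)\geq\sum_{i=1}^m h_Y(i)+1$ directly on the quantity $\theta_X(m)$, doing the curve, surface and threefold cases by hand before the general induction, and it reserves the $P_s$ estimate for the cone case only. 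Your route makes the equality analysis more transparent (strict monotonicity of $P_s$ in $s$ plus the minimal-degree characterisation) and shows that (b) is the same inequality with $s=r-k$, at the cost of invoking Proposition~\ref{prop:cast} twice rather than once. The Key Lemma you flag as the main obstacle is precisely the paper's Lemma~\ref{lem:basic}, and its proof there is short: $r-k$ general $k$-planes of $X$ span $\Proj^r$ by a dimension count using that two of them meet in dimension at most $k-2$ (non-cone hypothesis), so a general hyperplane section through $r-k$ general points is Grassmann non-degenerate; and if that section were a cone then any two general planes of $X$ would meet in dimension $k-1$, forcing $X$ itself to be a cone or degenerate. So the obstacle you correctly identified is real but modest.
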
 

The proof of part (a) works by induction on the dimension $n$ of $X$.  We first treat the curve case in \S \ref {sec:curve}, then the surface and threefold case in \S \ref {sec:gen}. Then, to complete the proof of part (a) in the rest of \S \ref {sec:gen}, we proceed by induction using a classical method going back to Castelnuovo, passing from $X$ to a general hyperplane section of it. We will treat case (b) in \S \ref {sec:cone}, again using Castelnuovo's method. In \S \ref {sec:ex} we finish with some examples showing that the varieties mentioned in the statement of Theorem \ref{thm:main}, for which equality holds in \eqref {eq:Gal3} and \eqref {eq:Gal4}, do really occur. \medskip

{\bf Acknowledgements:} The author is a member of GNSAGA of INdAM.

\section{The curve case}\label{sec:curve}

In this section we will prove Theorem \ref {thm:main} in the curve case $n=1$. A great part of this proof is already in Gallarati's papers \cite {Gal1,Gal2}. However it is the case to review some details of the proof that are a bit too terse in Gallarati's treatment.

Let us start with the following  easy lemma:

\begin{lemma}\label{lem:basic} Let $X$ be an irreducible projective  Grassmann non--degenerate curve in $\mathbb G(k,r)$. Then given $r-k$ general points $P_1,\ldots, P_{r-k}$ of $X$, there is a hyperplane section of $\mathbb G(k,r)$ not containing $X$ and containing $P_1,\ldots, P_{r-k}$.
\end{lemma}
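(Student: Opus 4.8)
The statement is about finding a hyperplane in $\Proj^{N(k,r)}$ (a "special linear complex" in classical language) that passes through $r-k$ general Plücker points $P_1,\dots,P_{r-k}$ of $X$ but does not contain $X$ entirely. My plan is to translate the problem from the Grassmannian to $\Proj^r$: a hyperplane of $\Proj^{N(k,r)}$ restricted to $\G(k,r)$ corresponds (up to special complexes) to a linear complex, and the most useful ones here are the special complexes $\{\pi\in\G(k,r): \pi\cap \Lambda\neq\emptyset\}$ where $\Lambda$ is an $(r-k-1)$--dimensional subspace of $\Proj^r$; more generally any hyperplane section is a linear combination of Plücker coordinates, i.e.\ corresponds to a skew form, and the special ones correspond to decomposable forms. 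The key point is that the condition "the complex contains $P_i$" means "$\Lambda$ meets the $k$--plane $\pi_i$ corresponding to $P_i$", and the condition "the complex does not contain $X$" means "$\Lambda$ does not meet the general member $\pi$ of the family $X$", i.e.\ $Z(X)\not\subseteq$ (the corresponding complex).

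First I would fix $r-k$ general points $P_1,\dots,P_{r-k}$ of $X$ with corresponding $k$--planes $\pi_1,\dots,\pi_{r-k}\subseteq\Proj^r$. Since $X$ is Grassmann non--degenerate, $Z(X)=\bigcup_{\pi\in X}\pi$ spans $\Proj^r$; I will want to choose the $P_i$ generally enough that the planes $\pi_i$ are "as independent as possible". Concretely, I would try to pick a point $p_i\in\pi_i$ for each $i$ so that $p_1,\dots,p_{r-k}$ are linearly independent in $\Proj^r$ (possible because a general member of $X$, together with the previously chosen points, still spans more — using non--degeneracy and an inductive choice); then the span $\Lambda=\langle p_1,\dots,p_{r-k}\rangle$ is an $(r-k-1)$--plane meeting each $\pi_i$ (in at least the point $p_i$). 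The associated special complex $\{\pi:\pi\cap\Lambda\neq\emptyset\}$ is a hyperplane section of $\G(k,r)$ containing all the $P_i$. It remains to check it does not contain $X$: a general $\pi\in X$ is a $k$--plane, and for it to meet $\Lambda$ we'd need $\dim(\pi\cap\Lambda)\geq 0$, i.e.\ $\pi$ and $\Lambda$ fail to be "transverse" inside $\Proj^r$; since $k+(r-k-1)=r-1<r$ two general subspaces of these dimensions are disjoint, so I need the genericity of $\pi\in X$ (not the genericity of $\Lambda$, which is now fixed) to force $\pi\cap\Lambda=\emptyset$ for general $\pi$. This is where Grassmann non--degeneracy is used once more: if every $\pi\in X$ met the fixed $(r-k-1)$--plane $\Lambda$, one can argue (e.g.\ via a dimension count on the incidence correspondence, or by the structure of $Z(X)$) that this constrains $Z(X)$ too much — but in fact we only need that \emph{some} hyperplane section works, so even if this particular $\Lambda$ fails I can perturb the choice of the points $p_i\in\pi_i$ or, more robustly, argue that the locus of $\Lambda$'s through $p_1,\dots,p_{r-k}$ meeting the general $\pi\in X$ is a proper subvariety of the (nonempty) family of such $\Lambda$.

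The main obstacle I anticipate is the last step: ruling out that the chosen special complex contains all of $X$. The cleanest way is probably to consider the incidence variety $I=\{(\pi,\Lambda): \pi\in X,\ \Lambda\cap\pi\neq\emptyset,\ p_i\in\Lambda\ \forall i\}$ and show by a dimension count that the projection to the space of admissible $\Lambda$'s is not dominant, equivalently that for a general admissible $\Lambda$ the general $\pi\in X$ misses it; the expected-dimension heuristic ($\pi$ is $k$--dimensional, $\Lambda$ is $(r-k-1)$--dimensional, ambient is $r$, so meeting is codimension $1$) gives exactly that this is a proper condition, and one needs the non--degeneracy of $Z(X)$ to guarantee the heuristic is not violated — e.g.\ if $X$ presented the cone case or $Z(X)$ were degenerate the count could fail. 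Alternatively, and perhaps more in the spirit of Gallarati, one observes that a hyperplane section containing all of $X$ would have to contain the linear span of $X$ in $\Proj^{N(k,r)}$, and one estimates the dimension of that span from below using non--degeneracy of $Z(X)$, showing it is too large to be forced through only $r-k$ general points; then a dimension count on $H^0(\O_{\G(k,r)}(1))$ minus the conditions imposed by $X$ versus minus the $r-k$ point conditions finishes it. I would present the argument via the special complex $\Lambda$ as above, since it is the most concrete and makes the equality cases in Theorem \ref{thm:main} transparent later.
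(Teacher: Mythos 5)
Your plan is correct and follows essentially the same route as the paper: choose a general point $p_i$ on each $k$--plane $P_i$, use non--degeneracy of $Z(X)$ to make $p_1,\ldots,p_{r-k}$ linearly independent, span them by an $(r-k-1)$--plane $\Pi$, and take the special linear complex of $k$--planes meeting $\Pi$. The paper closes the one step you leave open (that this complex does not contain $X$) exactly by your ``expected--dimension'' count: since $X$ is a curve, $Z(X)$ has dimension $k+1$, so the general such $\Pi$ meets $Z(X)$ in finitely many points and hence misses the general member of the family.
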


\begin{proof} Given a linear subspace $\Pi$ of dimension $r-k-1$, one can consider the Schubert cycle of all linear $k$--subspaces of $\Proj^r$ intersecting $\Pi$. This is a linear complex, i.e.,  a hyperplane section $H_\Pi$ of $\G(k,r)$. Given $P_1,\ldots, P_{r-k}$ general points of $X$, these correspond to $r-k$ general subspaces sweeping out $Z(X)$. Let us fix a general point $p_i\in P_i$ for any $1\leq i\leq r-k$, so that $p_1,\ldots, p_{r-k}$ are general points of $Z(X)$. The Grassmann non--degeneracy of 
$X$ is equivalent to $Z(X)$ being non--degenerate, so $p_1,\ldots, p_{r-k}$ are linearly independent in $\Proj^r$. Then $\Pi=\langle p_1,\ldots, p_{r-k}\rangle$ is a $(r-k-1)$--subspace of $\Proj^r$, and actually it is a general such subspace, so that it intersects $Z(X)$, that has dimension $k+1$, in finitely many points. Then $\Pi$ intersects the subspaces $P_1,\ldots, P_{r-k}$ but does not intersect all the subspaces $P\in X$. Hence $H_\Pi$ is a hyperplane section of $\mathbb G(k,r)$ not containing $X$ and containing $P_1,\ldots, P_{r-k}$, as required. \end{proof}

Let $X\subset \Proj^r$ be an irreducible non--degenerate projective variety. For all non--negative integers $m$ we let $h_X(m)$ be the \emph{Hilbert function} of $X$, i.e., $h_X(m)$ is  the dimension of the image of the restriction map
$$
H^0(\Proj^r, \mathcal O_{\Proj^r}(m))\longrightarrow H^0(X, \O_X(m)).
$$ 
So $h_X(m)-1$ is the dimension of the linear series cut out on $X$ by the hypersurfaces of degree $m$ of $\Proj^r$.

Now suppose that $X$ is an irreducible projective Grassmann non--degenerate variety of dimension $n$  in $\mathbb G(k,r)$. For any non--negative integer $m$, we have the exact sequence
$$
0\longrightarrow \mathcal I_{X,\G(k,r)}(m)\longrightarrow \O_{\G(k,r)}(m)\longrightarrow  \O_{X}(m)\longrightarrow 0
$$
which gives
\begin{equation}\label{eq:seq}
0\longrightarrow H^0(\G(k,r),\mathcal I_{X,\G(k,r)}(m))\longrightarrow H^0(\G(k,r),\O_{\G(k,r)}(m))\longrightarrow  H^0(X,\O_{X}(m))
\end{equation}
The image of 
\begin{equation}\label{eq:rho}
\rho_m: H^0(\G(k,r),\O_{\G(k,r)}(m))\longrightarrow  H^0(X,\O_{X}(m))
\end{equation}
coincides with the image of 
$$H^0(\Proj^r,\O_{\Proj^r}(m))\longrightarrow  H^0(X,\O_{X}(m))$$
because $\G(k,r)$ is projectively normal and therefore
\begin{equation}\label{eq:rho1}
H^0(\Proj^r,\O_{\Proj^r}(m))\longrightarrow  H^0(\G(k,r),\O_{\G(k,r)}(m))
\end{equation}
is surjective for all non--negative integers $m$. So the dimension of the image of $\rho_m$ is $h_X(m)$. 

 By \eqref {eq:seq} we have
\begin{equation}\label{eq:sequ}
\theta_X(m) =\varepsilon _{k,r}(m)-h_X(m) 
\end{equation}

Now we are ready for the:

	\begin{proof}[Proof of Theorem \ref {thm:main} for $n=1$] By \eqref {eq:sequ}, to bound $\theta_X(m)$ from above we have to bound $h_X(m)$ from below. To do so, notice that given $m(r-k)$ general points of $X$, there is some $m$--complex of $\G(k,r)$ containing those points and not containing $X$. It suffices to divide the $m(r-k)$ general points of $X$ in $m$ subsets of $r-k$ general points, and then apply Lemma \ref {lem:basic}, thus finding the required $m$--complex  splitting in $m$ linear complexes each  containing one of the $m$ subsets of $r-k$  points. This proves that 
\begin{equation}\label{eq:sequor}
h_X(m)-1\geq m(r-k). 
\end{equation} 
Then by \eqref {eq:sequ} we get
$$
\theta_X(m) =\varepsilon _{k,r}(m)-h_X(m) \leq \varepsilon _{k,r}(m)-m(r-k)-1=\theta(1,k,r,m)
$$
proving  \eqref {eq:Gal3}  in this case. 

The proof of (i) in the case $n=1$ is contained in \cite {Gal1,Gal2} and we do not dwell on this here.
As for (ii), if $X$ is a rational normal curve of degree $r-k$, then for all positive integers $m$ we have $h_X(m)=m(r-k)+1$ and therefore from \eqref {eq:sequ} we find that $\theta_X(m)=\theta(1,k,r,m)$ as wanted. \end{proof}

\section{The general case}\label{sec:gen}

Before treating the general case it is necessary to work out first the surface and the threefold cases.
We start with some lemmata.

 \begin{lemma}\label{lem:basic0} Let $X$ be an irreducible projective  Grassmann non--degenerate variety in $\mathbb G(k,r)$ not presenting the cone case. Let $P_1,\ldots, P_{r-k}$ be general points in $X$, which correspond to  $k$--subspaces of $\Proj^r$ that we denote by the same symbols. Then 
  $$
 \langle P_1,\ldots, P_{r-k}\rangle =\Proj^r.
  $$
  \end{lemma}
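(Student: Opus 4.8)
The plan is to argue by contradiction, exploiting the genericity of the points $P_1,\dots,P_{r-k}$ and the irreducibility of $X$. Suppose that for general $P_1,\dots,P_{r-k}\in X$ the span $\Lambda:=\langle P_1,\dots,P_{r-k}\rangle$ is a proper subspace of $\Proj^r$; since each $P_i$ has dimension $k$, a dimension count shows that the "generic" value of $\dim\Lambda$ is $\min\{(k+1)(r-k)-1,\,r\}$, which equals $r$ as soon as the $P_i$ are in general position — unless they are forced into a special configuration. The content of the lemma is precisely that the only obstruction to general position is the cone case. So the goal is: if adding one more general point of $X$ never increases the span beyond some fixed proper subspace, derive that all $k$-subspaces parametrized by $X$ pass through a common $(k-1)$-subspace.

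The key steps, in order, would be: (1) Let $j$ be minimal such that for general $P_1,\dots,P_j\in X$ the span $\Lambda_j=\langle P_1,\dots,P_j\rangle$ has the property that a further general point $P_{j+1}\in X$ satisfies $P_{j+1}\subseteq\Lambda_j$; such a $j$ exists and $j<r-k$ precisely because $Z(X)$ is non-degenerate (so eventually the span is everything) while we are assuming it is reached before step $r-k$. (2) By irreducibility of $X$ and the fact that "general" is a Zariski-open condition, $P\subseteq\Lambda_j$ for \emph{every} $P\in X$; hence $Z(X)\subseteq\Lambda_j$, forcing $\Lambda_j=\Proj^r$ by Grassmann non-degeneracy, and so $\dim\Lambda_j=r$, i.e. $(k+1)j-1\ge r$ cannot already be full — one needs to be careful here, because the honest statement is about the interaction of two general $k$-planes. (3) The crucial geometric input: two general points $P_i,P_{i+1}$ of $X$ span a $(2k+1)$-plane unless $\dim(P_i\cap P_{i+1})\ge k$ for a general pair; and more generally, if the expected growth $\dim\Lambda_{j+1}=\dim\Lambda_j+(k+1)$ fails, then a general $P_{j+1}$ meets $\Lambda_j$ in dimension $\ge$ some $e\ge 0$, and iterating this down the chain $P_1,\dots,P_{r-k}$ one shows the pairwise intersections $P_i\cap P_{i+1}$ all have dimension $\ge k-1$; taking a general point and specializing, these common $(k-1)$-subspaces must coincide, which is exactly the cone case.

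The main obstacle I expect is step (3): controlling how the dimension of $\langle P_1,\dots,P_j\rangle$ grows as one adds general points of $X$, and converting "the span grows slower than expected" into the rigid conclusion that a fixed $(k-1)$-plane is common to all the $k$-subspaces. The cleanest route is probably to phrase everything in terms of $Z(X)\subseteq\Proj^r$: pick general points $p_i\in P_i$, note $p_1,\dots,p_{r-k}$ are linearly independent (this is the non-degeneracy argument already used in the proof of Lemma \ref{lem:basic}), and then show that if $\langle P_1,\dots,P_{r-k}\rangle\ne\Proj^r$ one can, after re-choosing the general points, produce $r+1$ independent points in $Z(X)$ lying in $\bigcup P_i$ unless the $P_i$ share a $(k-1)$-plane — a contradiction with non-degeneracy in the first case, and the cone case in the second. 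I would want to be careful that "general point of $X$" and "general point of $Z(X)$" interact correctly (via the incidence variety $\{(x,P): x\in P\in X\}$, which dominates both $X$ and $Z(X)$), and that the exclusion of the cone case is used exactly once, at the point where one concludes the common intersection of consecutive $P_i$'s is empty.
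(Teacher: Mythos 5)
Your overall strategy --- track the growth of the span $\Lambda_j=\langle P_1,\dots,P_j\rangle$ as general points of $X$ are added, and convert insufficient growth into the cone case --- is the same as the paper's, but the key quantitative step is wrong as stated. In your step (3) you claim that if the span grows more slowly than the expected $k+1$ per step, then iterating down the chain forces $\dim(P_i\cap P_{i+1})\geq k-1$ for general pairs. This is false: two general members of an irreducible family of $k$--planes may meet in any dimension between $0$ and $k-1$ (for instance two $2$--planes in $\Proj^4$ always meet in at least a point), in which case each step adds strictly between $1$ and $k+1$ to the span and no cone structure follows. What the lemma actually needs, and what the paper proves, is much weaker: the \emph{first} increment $\dim\langle P_1,P_2\rangle-k$ is at least $2$, and every \emph{subsequent} increment is at least $1$. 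The first claim is the only place the cone case enters: if two general $P_i$'s spanned only a $(k+1)$--plane they would meet in a $(k-1)$--plane, and the classical dichotomy for such families (all members in a fixed $(k+1)$--plane, or all through a fixed $(k-1)$--plane) contradicts either Grassmann non--degeneracy or the exclusion of the cone case. The later increments being at least $1$ follow from non--degeneracy alone, since a general point of $X$ cannot lie in a fixed proper subspace of $\Proj^r$. The count $k+2+(r-k-2)=r$ then closes the argument. Your iteration never isolates this first--step dichotomy, and with only the estimate ``each step adds at least $1$'' one gets $k+(r-k-1)=r-1$, which is insufficient.

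The alternative route sketched in your last paragraph is moreover circular: exhibiting $r+1$ linearly independent points of $Z(X)$ inside $\bigcup_{i=1}^{r-k}P_i$ is literally equivalent to the conclusion $\langle P_1,\dots,P_{r-k}\rangle=\Proj^r$, so it cannot serve as an intermediate step. The independence of one general point $p_i$ chosen in each $P_i$, which you correctly recall from the linear--complex lemma of the curve section, only yields $r-k$ independent points, i.e.\ $\dim\Lambda_{r-k}\geq r-k-1$; the whole difficulty is to upgrade this bound, and that is exactly what the first--increment estimate accomplishes.
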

  
  \begin{proof}  One has $\dim (\langle P_1, P_2\rangle)=k+a_1$, with $a_1\geq 1$. If $a_1=1$, two $k$--subspaces corresponding to general points of  $X$ span a linear space of dimension $k+1$, so they intersect in a linear subspace of dimension $k-1$. Then either they all lie in a linear space of dimension $k+1$ or they all pass through a linear space of dimension $k-1$, but neither case can occur by the non--degeneracy of $X$ and the fact that $X$ does not present the cone case. Hence $a_1\geq 2$. 
  
  Next, if $\langle P_1, P_2\rangle=\Proj^r$ the claim clearly holds. So suppose that $\langle P_1, P_2\rangle$ is a proper subspace of $\Proj^r$, hence $k+a_1<r$. Then $P_3$ cannot lie in $\langle P_1, P_2\rangle$  by the non--degeneracy of $X$, so $\dim (\langle P_1, P_2, P_3 \rangle)=k+a_1+a_2$, with $a_2\geq 1$. If $\langle P_1, P_2, P_3 \rangle=\Proj^r$, the claim is clearly true, otherwise we iterate the above argument. The upshot is that  either a proper subset of $P_1,\ldots, P_{r-k}$ spans $\Proj^r$ and we are done, or 
  $$
  \dim( \langle P_1,\ldots, P_{r-k}\rangle)=k+a_1+\cdots +a_{r-k-1}\geq k+r-k=r
  $$
 proving the lemma. \end{proof}
  
   \begin{lemma}\label{lem:basic1} Let $X$ be an irreducible projective  Grassmann non--degenerate variety in $\mathbb G(k,r)$. Then 
   $$
   h_X(1)\geq r-k+1.
   $$
  \end{lemma}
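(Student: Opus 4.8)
The claim is the special case $m=1$ of the inequality $h_X(m)-1\ge m(r-k)$, and the plan is to repeat the argument used above for $n=1$, after observing that it never really used $\dim X=1$. First I would note that the proof of Lemma \ref{lem:basic} goes through verbatim for a Grassmann non--degenerate variety $X$ of arbitrary dimension: the only place the curve hypothesis entered (``$\Pi$ meets $Z(X)$ in finitely many points'') is irrelevant, since all that is actually used is that, for $P_1,\dots,P_{r-k}$ general in $X$ and $p_i\in P_i$ general, the subspace $\Pi=\langle p_1,\dots,p_{r-k}\rangle$ is a general $(r-k-1)$--subspace, hence contains each $p_i$ — so meets $P_1,\dots,P_{r-k}$ — while being disjoint from the general $k$--subspace of $X$, because $k+(r-k-1)<r$. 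Thus for general $P_1,\dots,P_{r-k}\in X$ there is a Schubert hyperplane section $H_\Pi$ of $\G(k,r)$ containing $P_1,\dots,P_{r-k}$ but not $X$.

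Next I would pass to the linear series $\Sigma\subseteq H^0(X,\O_X(1))$ cut out on $X$ by hyperplanes of $\Proj^{N(k,r)}$. By the projective normality of $\G(k,r)$ and \eqref{eq:rho}, \eqref{eq:rho1} one has $\dim\Sigma=h_X(1)$, and the morphism attached to $\Sigma$ is just the inclusion $X\hookrightarrow\langle X\rangle$, so $X$ is non--degenerate in $\langle X\rangle\cong\Proj^{h_X(1)-1}$. Restricting to $X$ the section defining $H_\Pi$ produces a nonzero element of $\Sigma$ vanishing at the $r-k$ general points $P_1,\dots,P_{r-k}$. Since general points of a non--degenerate variety impose independent conditions on the complete series of hyperplanes up to its dimension, those $r-k$ general points impose exactly $\min(r-k,h_X(1))$ independent conditions on $\Sigma$; the existence of a nonzero member of $\Sigma$ through all of them forces $\min(r-k,h_X(1))<h_X(1)$, i.e. $h_X(1)\ge r-k+1$. (If $X=\G(k,r)$ the inequality is trivial, since $\varepsilon_{k,r}(1)=\binom{r+1}{k+1}\ge r+1$.)

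The only point needing care is the independence--of--conditions step, which is classical: if $Y\subseteq\Proj^{d}$ is irreducible and non--degenerate, then $j\le d+1$ general points of $Y$ span a $\Proj^{j-1}$, i.e. they impose $j$ independent conditions on $H^0(\Proj^{d},\O(1))$. If one prefers to invoke Lemma \ref{lem:basic} only in the stated (curve) form, one can instead first produce an irreducible Grassmann non--degenerate curve $C\subseteq X$ — joining by an irreducible curve finitely many members of $X$ chosen through $r+1$ general points of $Z(X)$, which span $\Proj^r$ by non--degeneracy, so that $Z(C)$ contains $r+1$ independent points — and then conclude via $h_X(1)\ge h_C(1)\ge r-k+1$, the last inequality being the curve case already established. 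No cone/non--cone distinction is needed here.
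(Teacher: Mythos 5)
Your main route (first two paragraphs) has a gap at its key step. You assert that $\Pi=\langle p_1,\dots,p_{r-k}\rangle$ is ``a general $(r-k-1)$--subspace'' and conclude that it misses the general member of $X$ ``because $k+(r-k-1)<r$''. That dimension count is valid for a subspace in general position, but your $\Pi$ is not free: the points $p_i$ are constrained to lie on $Z(X)$, so $\Pi$ moves in the image of the span map $Z(X)^{r-k}\dashrightarrow \G(r-k-1,r)$, which is dominant only if one proves it (compare: spans of pairs of general points of a space curve are far from general lines). What is really needed is that a general codimension-$(k+1)$ linear section of the non--degenerate variety $Z(X)$ is non--degenerate (in dimension zero, in general position), so that a general $(r-k-1)$--plane is indeed spanned by points of $Z(X)$; this is exactly the kind of content carried by the finiteness/generality step in the curve lemma of \S\ref{sec:curve}, which you dismiss as irrelevant. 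The middle paragraph (imposing $\min(r-k,h_X(1))$ conditions, hence $h_X(1)\geq r-k+1$ once a section of $H_\Pi$ not vanishing on $X$ exists) is fine, but it rests on the unproved generality of $\Pi$.

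Your fallback argument, however, is correct and is essentially the paper's own proof: the paper also reduces to the curve case, taking for $C$ a general high-degree complete intersection curve in $X$ through $r-k$ general points of $X$, and then invoking Lemma \ref{lem:basic0} to see that $C$ is Grassmann non--degenerate, concluding via $\theta_C(1)=\theta_X(1)$ and the curve bound. Your variant --- an irreducible curve $C\subseteq X$ through finitely many members of $X$ covering $r+1$ general (hence spanning) points of $Z(X)$, followed by $h_X(1)\geq h_C(1)\geq r-k+1$ from \eqref{eq:sequor} --- achieves the same reduction, and it even has a small advantage: it uses only the non--degeneracy of $Z(X)$, whereas Lemma \ref{lem:basic0} is stated under the non--cone hypothesis, which is not among the hypotheses of Lemma \ref{lem:basic1} (in the cone case one can alternatively note that $X$ spans the $(r-k)$--subspace $\Pi^\perp$). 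So: rely on your last paragraph, or repair the first by proving the general-position statement for linear sections of $Z(X)$.
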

  
  \begin{proof} If $X$ is a curve, the assertion follows from \eqref {eq:sequor}. Assume that $X$ has dimension $n\geq 2$. Fix  $P_1,\ldots, P_{r-k}$ general points in $X$. Let $C$ be the  complete intersection curve of $X$ with $n-1$ general hypersurfaces of degree $d\gg 0$ containing $P_1,\ldots, P_{r-k}$. One clearly has $\theta_C(1)=\theta_X(1)$. By Lemma \ref {lem:basic0}, $C$ is Grassmann non--degenerate, and therefore, by \eqref {eq:Gal3} one has 
  $$\theta_X(1)=\theta_C(1)\leq \theta(1, k, r, 1)={{r+1}\choose {k+1}} -r+k-1,$$
whence the assertion immediately follows.
  \end{proof}

 \begin{lemma}\label{lem:basic} Let $X$ be an irreducible projective  Grassmann non--degenerate variety of dimension $n\geq 2$ in $\mathbb G(k,r)$ not presenting the cone case. If $Y$ is a general hyperplane section of $X$, then $Y$ is also Grassmann non--degenerate and does not present the cone case.
  \end{lemma}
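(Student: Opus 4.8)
The plan is to deduce both statements from Lemma \ref{lem:basic0}, using the standard principle that $r-k$ general points of a general hyperplane section of $X$ are again $r-k$ general points of $X$.

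I would first isolate the following elementary remark: if $P_1,\dots,P_{r-k}$ are $k$-subspaces of $\Proj^r$ all containing one and the same $(k-1)$-subspace $\Lambda$, then $\langle P_1,\dots,P_{r-k}\rangle$ has dimension at most $(k-1)+(r-k)=r-1$, since passing from $\Lambda$ to $\langle\Lambda,P_1\rangle=P_1$ and then adjoining the remaining $P_i$ raises the dimension by at most one each time. Consequently, if for a subvariety $W\subseteq\G(k,r)$ one can find $r-k$ points of $W$ whose corresponding $k$-subspaces span $\Proj^r$, then $Z(W)$ spans $\Proj^r$, so $W$ is Grassmann non-degenerate; and moreover $W$ cannot present the cone case, for otherwise those $r-k$ subspaces would all contain a common $(k-1)$-subspace and hence span at most a $\Proj^{r-1}$, a contradiction.

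It therefore suffices to show that a general hyperplane section $Y=X\cap H$ contains $r-k$ points whose $k$-subspaces span $\Proj^r$. Let $\mathcal U\subseteq X^{r-k}$ be the set of tuples $(P_1,\dots,P_{r-k})$ with $\langle P_1,\dots,P_{r-k}\rangle=\Proj^r$; by Lemma \ref{lem:basic0} it is non-empty, hence a dense open subset. I would introduce the incidence variety
$$
\mathcal J=\big\{(H,P_1,\dots,P_{r-k})\in(\Proj^{N(k,r)})^\vee\times X^{r-k}\ :\ P_i\in X\cap H\ \text{for all}\ i\big\},
$$
whose second projection is dominant, a general fibre being the non-empty linear system of hyperplanes of $\Proj^{N(k,r)}$ through the chosen points (note $N(k,r)\ge r-k$). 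Choosing a component $\mathcal J_0$ of $\mathcal J$ dominating $X^{r-k}$, the preimage of $\mathcal U$ is dense in $\mathcal J_0$; a dimension count — using Lemma \ref{lem:basic1}, which gives $\dim\langle X\rangle\ge r-k$ inside $\Proj^{N(k,r)}$, to control the fibres of the first projection — shows that $\mathcal J_0$ also dominates $(\Proj^{N(k,r)})^\vee$. Hence for a general $H$ the fibre of $\mathcal J_0$ over $H$ meets the preimage of $\mathcal U$, producing points $P_1,\dots,P_{r-k}\in Y$ with $\langle P_1,\dots,P_{r-k}\rangle=\Proj^r$. By the remark of the previous paragraph, $Y$ is Grassmann non-degenerate and does not present the cone case; and $Y$ is an irreducible variety by Bertini, since $\dim\langle X\rangle\ge r-k\ge 2$.

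The only genuinely non-formal ingredient is the passage of genericity from $X$ to its general hyperplane section, packaged here in the claim that $\mathcal J_0$ dominates $(\Proj^{N(k,r)})^\vee$; the remaining steps are linear algebra and a routine dimension computation, so I do not expect a serious obstacle.
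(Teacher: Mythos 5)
Your proposal is correct, but it follows a genuinely different route from the paper's, and in one respect a more unified one. The paper proves the two conclusions separately: for Grassmann non--degeneracy it uses Lemma \ref{lem:basic1} to find a hyperplane section through $r-k$ general points of $X$, asserts that such a section may be taken to be the general one, and concludes by Lemma \ref{lem:basic0}; for the cone case it argues by contradiction, observing that otherwise two general points of $X$ would correspond to $k$--subspaces meeting along a $(k-1)$--subspace, so that all subspaces of $X$ would pass through a fixed $(k-1)$--subspace or lie in a fixed $(k+1)$--subspace, against the hypotheses. You instead reduce both conclusions to the single claim that the general hyperplane section contains $r-k$ points whose subspaces span $\Proj^r$, via the observation that $r-k$ subspaces through a common $(k-1)$--subspace span at most a $\Proj^{r-1}$; this makes the cone-case half automatic and avoids the dichotomy argument altogether. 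Your incidence-variety formulation also makes explicit the genericity transfer that the paper compresses into ``$Y$ can be considered to be a general hyperplane section'', and the dimension count you only sketch does close: $\dim \mathcal{J}_0=N(k,r)+(n-1)(r-k)$, the fibres of the first projection over hyperplanes $H\not\supset X$ have dimension $(n-1)(r-k)$, and the locus of hyperplanes containing $X$ has codimension $h_X(1)\geq r-k+1$ by Lemma \ref{lem:basic1}, so a non-dominant first projection would force $\dim\mathcal{J}_0\leq N(k,r)-1+(n-1)(r-k)$, a contradiction; thus the general fibre meets the preimage of $\mathcal U$ as you claim. Two minor points: irreducibility of $Y$ comes from Bertini because $\dim X=n\geq 2$ (the condition $\dim\langle X\rangle\geq r-k\geq 2$ you quote is not the relevant hypothesis), and you should note that $\mathcal U$ is open, not merely non-empty, in $X^{r-k}$ (spanning is an open condition), though both are immediate.
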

  
  \begin{proof} Let $P_1,\ldots, P_{r-k}$ be general points in $X$.  By Lemma \ref {lem:basic1}, there is a hyperplane section $Y$ of $X$ containing $P_1,\ldots, P_{r-k}$, and $Y$ can be considered to be a general hyperplane section of $X$. By Lemma \ref {lem:basic0}, $Y$ is Grassmann non--degenerate. 
  
 To finish the proof we have to show that for a general hyperplane section $Y$ of $X$, $Y$ does not present the cone case. Suppose, by contradiction, that this is the case. Then given two general points $P_1,P_2\in X$, there is a hyperplane section of $X$ containing them, and therefore the $k$--subspaces $P_1,P_2$ intersect along a $(k-1)$--subspace. Then either all subspaces corresponding to points of $X$ pass through the same $(k-1)$--subspace or they all lie in the same $(k+1)$--subspace. Both possibilites lead to contradictions since $X$ is Grassmann non--degenerate and does not present the cone case.  \end{proof}
 
 Next let $X\subset \Proj^r$ be an irreducible non--degenerate projective variety. Let $Y$ be a general hyperplane section of $X$. For all $m\geq 1$  one has the  inequality 
\begin{equation}\label{eq:cas}
h_X(m)-h_X(m-1)\geq h_Y(m)
\end{equation}
and equality holds for all $m$ if and only if $X$ is projectively normal (see \cite [Lemma (3.1)]{Ha}).

\begin{proof}[Proof of part (a) of Theorem \ref {thm:main} for surfaces] Let $X$ be an irreducible projective Grassmann non--degenerate surface  in $\mathbb G(k,r)$ not presenting the cone case and let $Y$ be a general hyperplane section of $X$. By Lemma \ref {lem:basic}, $Y$ is Grassmann non--degenerate. By \eqref {eq:cas}, for all positive integers $m$ one has the inequalities
\begin{align}\label{eq:inequ}
&h_X(m)-h_X(m-1)\geq h_Y(m)\\
\nonumber&\ldots\\
\nonumber &h_X(1)-h_X(0)\geq h_Y(1)
\end{align}
and summing up we get
\begin{equation}\label{eq:inequ2}
h_X(n)\geq \sum_{i=1}^mh_Y(i)+h_X(0)=\sum_{i=1}^mh_Y(i)+1. 
\end{equation}
By \eqref {eq:sequ} we have
\begin{equation}\label{eq:inequ3}
\varepsilon _{k,r}(m)-\theta_X(m)=h_X(n)\geq \sum_{i=1}^mh_Y(i)+1=\sum_{i=1}^m(\varepsilon _{k,r}(i)-\theta_Y(i))+1
\end{equation}
whence, using Theorem \ref {thm:main} for the curve $Y$ (that we can do because $Y$ is Grassmann non--degenerate), we have
\begin{align}\label{eq:ineq}
\theta_X(m)&\leq \varepsilon _{k,r}(m)-\sum_{i=1}^m(\varepsilon _{k,r}(i)-\theta_Y(i))-1=\\
\nonumber &=\varepsilon _{k,r}(m)-\sum_{i=1}^m\varepsilon _{k,r}(i)+\sum_{i=1}^m\theta_Y(i)-1=\\
\nonumber &=\sum_{i=1}^m\theta_Y(i)-\sum_{i=1}^{m-1}\varepsilon _{k,r}(i)-1\leq \\
\nonumber  &\leq \sum_{i=1}^m \Big (\varepsilon _{k,r}(i)-i(r-k)-1 \Big)- \sum_{i=1}^{m-1}\varepsilon _{k,r}(i)-1=\\
\nonumber&=\varepsilon _{k,r}(m)-\frac {m(m+1)}2(r-k)-m-1=\theta(2,k,r,m)
\end{align}
proving \eqref {eq:Gal3} in this case. 

Next let us prove (i). If $\theta_X(1)=\theta(2,k,r,1)$ then the above argument shows that $\theta_Y(1)= \theta(1,k,r,1)$. By Theorem \ref {thm:main}(i) for $Y$ and by Lemma \ref {lem:basic},  $Y$ is a rational normal curve of degree $r-k$, because $Y$ cannot present the cone case. Hence $X$ is a rational normal surface of degree $r-k$. Conversely, if 
$X$ is a rational normal surface of degree $r-k$, then $h_X(1)=r-k+2$ and therefore
$$
\theta_X(1)=\varepsilon _{k,r}(1)-(r-k+2)=\theta(2,k,r,1)
$$
as wanted. 

As for (iii), suppose that for some $m>1$ one has $\theta_X(m)=\theta(2,k,r,m)$. Then the above argument implies that  
$\theta_Y(i)=\theta(1,k,r,i)$, for $1\leq i\leq m$. By Theorem \ref {thm:main}(ii) for $Y$, $Y$ is a rational normal curve of degree $r-k$ and therefore $X$ is a rational normal surface of degree $r-k$.
Conversely, if $X$ is a rational normal surface of degree $r-k$, then $X$ is projectively normal and we have equalities in \eqref {eq:inequ}, \eqref {eq:inequ2} and \eqref {eq:inequ3}. Moreover   $\theta_Y(i)=\theta(1,k,r,i)$ for all positive integers $i$ by Theorem \ref {thm:main}(ii) for $Y$, and therefore we have equalities in \eqref{eq:ineq}. This implies that  $\theta_X(m)=\theta(2,k,r,m)$.\end{proof}

The proof in the case of threefolds is similar so we will be brief.

\begin{proof}[Proof of part (a) of Theorem \ref {thm:main} for threefolds] Let $X$ be an irreducible projective Grassmann non--degenerate threefold  in $\mathbb G(k,r)$ not presenting the cone case and let $Y$ be a general hyperplane section of $X$, which, by Lemma \ref {lem:basic}, is Grassmann non--degenerate and does not present the cone case. Arguing as in the proof of the surface case, and applying part (a) of Theorem \ref {thm:main} for  $Y$, we have
\begin{align*}
\theta_X(m) &\leq \sum_{i=1}^m\theta_Y(i)-\sum_{i=1}^{m-1}\varepsilon _{k,r}(i)-1\leq \\
\nonumber  &\leq \sum_{i=1}^m \Big (\varepsilon _{k,r}(i)-\frac {i(i+1)}2 (r-k)-i-1 \Big)- \sum_{i=1}^{m-1}\varepsilon _{k,r}(i)-1=\\
\nonumber&=\varepsilon _{k,r}(m)- \Big(\sum_{i=1}^m\frac {i(i+1)}2\Big) (r-k)-\frac {m(m+1)}2-m-1=\theta(3,k,r,m)
\end{align*}
proving \eqref {eq:Gal3} in this case. The proof of (i) and (ii) proceed exactly in the same way as in the surface case, so we leave it to the reader.\end{proof}

We can now finish the proof of part (a) of Theorem \ref {thm:main}. The proof is similar to the surface and threefold case so we will again be brief.

\begin{proof}[Proof of part (a) of Theorem \ref {thm:main}, the general case] We will work by induction, since we have proved the theorem for $n=1,2,3$. Let $X$ be an irreducible projective Grassmann non--degenerate variety of dimension $n\geq 4$  in $\mathbb G(k,r)$ not presenting the cone case. Let $Y$ be its general hyperplane section, which by Lemma \ref {lem:basic},  is Grassmann non--degenerate and does not present the cone case. So we can apply induction on $Y$. Arguing as in the surface and threefold case and applying induction and \eqref {eq:triv}, we have
\begin{align*}
\theta_X(m) &\leq \sum_{i=1}^m\theta_Y(i)-\sum_{i=1}^{m-1}\varepsilon _{k,r}(i)-1\leq \\
\nonumber  &\leq \sum_{i=1}^m  (\varepsilon _{k,r}(i)-(r-k)\sigma(n-1,k,r,i)- \tau(n-1,k,r,i)) - \sum_{i=1}^{m-1}\varepsilon _{k,r}(i)-1=\\
\nonumber&=\varepsilon _{k,r}(m)- \Big(\sum_{i=1}^m\sigma(n-1,k,r,i) \Big) (r-k)- \Big(\sum_{i=1}^m
\tau(n-1,k,r,i)\Big)-1=\\
&=\theta(n,k,r,m)
\end{align*}
proving \eqref {eq:Gal3} in this case. The proof of (i) and (ii) proceeds exactly in the same way as in the surface case, so we leave it to the reader.
\end{proof}

\section{The cone case}\label{sec:cone}

Next we come to the proof of part (b) of Theorem \ref{thm:main}. For this we need a preliminary. 
Let $X$ be an irreducible, non degenerate variety of degree $d$ and dimension $n\geq 1$ in $\Proj^r$. We will denote by $X_ i$ the $i$--dimensional section of $X$ with a general subspace of dimension $r-n+i$ of $\Proj^r$. One has $X=X_n$. 
We note that 
\begin{equation}\label{eq:casteln}
h_{X_0}(m)\geq \min\{d,m(r-n)+1\}
\end{equation}
(see \cite   [Corollary (3.5)]{Ha}).

\begin{proposition}\label{prop:cast} Let $X$ be an irreducible, non degenerate variety of degree $d$ and dimension $n\geq 1$ in $\Proj^r$, with $r>n$. For all positive integers $m$, one has
\begin{equation}\label{eq:cast}
h_X(m)\geq {{m+n-1}\choose n}(r-n)+{{m+n}\choose n}.
\end{equation}
Moreover:\\
\begin{inparaenum}
\item[(i)]  equality holds in \eqref {eq:cast} for $m=1$;\\
\item[(ii)]  equality holds in \eqref {eq:cast}
for some $m\geq 2$ if and only if $X$ is a variety of minimal degree $d=r-n+1$.
\end{inparaenum}
\end{proposition}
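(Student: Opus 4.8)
The plan is to prove the inequality \eqref{eq:cast} by induction on $n$, exactly along the lines used already for $\theta_X$: reduce from $X=X_n$ to its general hyperplane section $X_{n-1}$ via \eqref{eq:cas}, and use \eqref{eq:casteln} as the base of the induction at $n=0$ (or rather at $n=1$, where $h_{X_1}(m)-h_{X_1}(m-1)\geq h_{X_0}(m)\geq m(r-1)+1$ provided $d$ is large enough; the case of small $d$ is the minimal-degree situation and will have to be tracked separately). First I would record the combinatorial identity $\sum_{i=1}^m\binom{i+n-2}{n-1}=\binom{m+n-1}{n}$ and $\sum_{i=1}^m\binom{i+n-1}{n-1}=\binom{m+n}{n}-1$, so that summing \eqref{eq:cas} over degrees $1,\dots,m$ turns the inductive bound for $X_{n-1}$ into the claimed bound for $X_n$, precisely mirroring the passage via \eqref{eq:triv} in the proof of part (a). Concretely: assuming $h_{X_{n-1}}(i)\geq \binom{i+n-2}{n-1}(r-n)+\binom{i+n-1}{n-1}$ for all $i$ — note the ambient dimension for $X_{n-1}$ drops to $r-1$, so one must be slightly careful that $(r-1)-(n-1)=r-n$, which is exactly what makes the recursion close — we get
$$
h_{X_n}(m)\geq h_{X_n}(0)+\sum_{i=1}^m h_{X_{n-1}}(i)\geq 1+\Big(\sum_{i=1}^m\binom{i+n-2}{n-1}\Big)(r-n)+\sum_{i=1}^m\binom{i+n-1}{n-1}=\binom{m+n-1}{n}(r-n)+\binom{m+n}{n}.
$$

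For the base case $n=1$, \eqref{eq:cas} gives $h_X(m)-h_X(m-1)\geq h_{X_0}(m)\geq \min\{d,m(r-1)+1\}$; if $d\geq m(r-1)+1$ for the relevant $m$ this telescopes to the desired bound, and if not, then $X$ is a curve of small degree, which forces $d=r$ (a rational normal curve, the minimal-degree case) and there one checks \eqref{eq:cast} holds with equality directly. This delicate bookkeeping of when \eqref{eq:casteln} is governed by $d$ rather than by $m(r-n)+1$ is, I expect, the main technical obstacle, and it is also exactly the source of the classification statement (ii): equality in \eqref{eq:cast} for some $m\geq 2$ propagates down the hyperplane sections, forcing equality in every \eqref{eq:cas} used (hence projective normality of each $X_i$, by the cited \cite[Lemma (3.1)]{Ha}) and equality in \eqref{eq:casteln} for the curve section, which by the displayed inequality $h_{X_0}(m)\geq\min\{d,m(r-n)+1\}$ and Castelnuovo's analysis forces $d=r-n+1$, i.e. $X$ of minimal degree; conversely a variety of minimal degree is projectively normal with $h_X(m)=\binom{m+n}{n}+(r-n)\binom{m+n-1}{n}$ (e.g. by del Pezzo–Bertini, or a direct computation on a rational normal scroll / cone / Veronese surface), giving equality.

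Finally, part (i) — equality for $m=1$ — is immediate and needs no induction: $h_X(1)=r+1$ always for a non-degenerate $X$, while the right-hand side of \eqref{eq:cast} at $m=1$ is $\binom{n}{n}(r-n)+\binom{n+1}{n}=(r-n)+(n+1)=r+1$. The only points requiring genuine care beyond this are (a) checking the ambient-dimension drop $r\mapsto r-1$ under hyperplane section is compensated by the dimension drop $n\mapsto n-1$ so the recursion is self-consistent, and (b) the small-degree / minimal-degree analysis feeding both the base case and the equality discussion; the rest is the same telescoping argument already deployed twice in \S\ref{sec:gen}.
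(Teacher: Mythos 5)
Your plan follows the paper's proof almost exactly: a double induction on $n$ and $m$ built on \eqref{eq:cas} and \eqref{eq:casteln} (your telescoped sum over $i=1,\dots,m$ is just the unrolled form of the induction on $m$ that the paper performs one step at a time), with the equality analysis propagated down through general linear sections to force minimal degree, and part (i) checked directly. The one step that would fail as written is your dichotomy in the base case $n=1$: ``if $d\geq m(r-1)+1$ the telescoping works, and if not then $d=r$'' is a false alternative --- a non-degenerate curve with $r<d<m(r-1)+1$ (say $d=5$, $r=3$, $m=3$) falls into neither branch. The point you actually need, and the one the paper uses, is that the inequality \eqref{eq:cast} never requires the full strength of the $\min$: since $d\geq r$ by non-degeneracy and $i(r-1)+1\geq r$ for every $i\geq 1$, one has $h_{X_0}(i)\geq \min\{d,i(r-1)+1\}\geq r$ unconditionally, so the telescoping gives $h_X(m)\geq 1+mr$ for every non-degenerate curve with no case distinction. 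The $\min$ only matters in the equality analysis: if equality holds for some $m\geq 2$, then $\min\{d,m(r-1)+1\}=r$, and since $m(r-1)+1\geq 2r-1>r$ this forces $d=r$. With that repair (and dropping the inessential parenthetical that equality in the finitely many instances of \eqref{eq:cas} you use yields projective normality of the sections --- that implication requires equality for \emph{all} $m$, and projective normality is only needed, and available, in the converse direction for varieties of minimal degree), your argument coincides with the paper's.
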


\begin{proof} This proposition is essentially contained in Theorem 6.1 in \cite {CC}. However we give here a proof for completeness. 

First we note that trivially equality holds in  \eqref {eq:cast} for $m=1$, i.e., (i) holds. 
Next we prove \eqref {eq:cast} by double induction, first on the dimension $n$ and then on $m$. 

In the curve case, we proceed by induction on $m$, assuming $m\geq 2$. By \eqref {eq:cas} we have
$$
h_X(m)\geq h_X(m-1)+h_{X_0}(m)
$$
and by \eqref {eq:casteln}, by the induction and the fact that $d\geq r$ since $X$ is non--degenerate, we have 
\begin{equation}\label{soppo}
h_X(m)\geq (m-1)(r-1)+m+r=m(r-1)+m+1
\end{equation}
proving \eqref {eq:cast} in this case. If equality holds in \eqref {soppo} in particular we have 
$$
\min\{d,m(r-1)+1\}=r
$$
and since $m(r-1)+1\geq 2r-1>r$ (because $r>1$), we deduce $d=r$, hence $X$ is a rational normal curve. Conversely, if $X$ is a rational normal curve, one has $h_X(m)=mr+1$ and the equality holds in  \eqref {eq:cast}.

Next we treat the  case of varieties $X$ of dimension $n>1$  and we assume by induction that the proposition holds for varieties of dimension smaller than $n$. Moreover we proceed by induction on $m$.  By \eqref {eq:cas}, by induction and by \eqref {eq:casteln}, we have
\begin{align}\label{eq:pappa}
h_X(m)&\geq h_X(m-1)+h_{X_{n-1}}(m)\geq \\
\nonumber &\geq {{m+n-2}\choose n}(r-n)+{{m+n-1}\choose n}+\\
\nonumber &+{{m+n-2}\choose {n-1}}(r-n)+{{m+n-1}\choose {n-1}}=\\
\nonumber &={{m+n-1}\choose n}(r-n)+{{m+n}\choose n}
\end{align}
as wanted. Moreover, if the equality holds in \eqref {eq:pappa} for some $m>1$, then the equality holds in \eqref {eq:cast} for some $m>1$ for $X_{n-1}$. Then by induction $X_{n-1}$ is of minimal degree and therefore also $X$ is of minimal degree. Conversely, if $X$ is of minimal degree, then also $X_{n-1}$ is of minimal degree. Moreover $X$ is projectively normal. Because of this and by induction we have that equalities hold in \eqref {eq:pappa} and therefore equality holds in \eqref {eq:cast}. \end{proof}

Now we are in position to finish the:

\begin{proof}[Proof of part (b) of Theorem \ref{thm:main}] Let $X$ be an irreducible projective Grassmann non--dege\-nerate variety of dimension $n$  in $\mathbb G(k,r)$ presenting the cone case. Let $\Pi$ be the vertex of $Z(X)$, that is a $(k-1)$--dimensional subspace of $\Proj^r$. Then $X$ lies in the $(r-k)$--subspace $\Pi^\perp$ contained in $\mathbb G(k,r)$ , whose points correspond to linear spaces of dimension $k$ containing $\Pi$. Since each map $\rho_m$ as in \eqref {eq:rho} factors through the maps
$$
H^0(\G(k,r),\O_{\G(k,r)}(m))\longrightarrow  H^0(\Pi^\perp ,\O_{\Pi^\perp}(m)) \longrightarrow H^0(X,\O_{X}(m))
$$
and since 
$H^0(\G(k,r),\O_{\G(k,r)}(m))\longrightarrow  H^0(\Pi^\perp ,\O_{\Pi^\perp}(m))$
is clearly surjective, to compute $h_X(m)$ it suffices to compute it as a subvariety of $\Pi^\perp$. We can then apply Proposition \ref {prop:cast}. By \eqref {eq:sequ} we have
\begin{align*}
\theta_X(m) &=\varepsilon _{k,r}(m)-h_X(m) \leq \\
&\leq \varepsilon _{k,r}(m)-{{m+n-1}\choose n}(r-k-n)-{{m+n}\choose n}=\bar\theta(n,k,r,m)
\end{align*}
as wanted. If the equality holds for some $m\geq 2$, then in particular 
$$
h_X(m)={{m+n-1}\choose n}(r-k-n)+{{m+n}\choose n}
$$
and, by Proposition \ref {prop:cast}, $X$ is a variety of minimal degree in $\Pi^\perp$, proving the assertion. \end{proof}

\section{Examples}\label{sec:ex}

In the statement of Theorem \ref {thm:main} it is mentioned the possibility that  either 
$X$ does not present the cone case and is a rational normal 
variety of degree $r-k$ in a subspace of dimension $r-k+n-1$ in $\Proj^{N(k,r)}$, or $X$ presents the cone case and $X$ is a  rational normal 
variety of degree $r-k-n+1$ in a linear subspace of dimension $r-k+n-1$ contained in $\G(k,r)$. The latter case can clearly occur, whereas it is not a priori clear that the former case can occur for all values of $n\geq 2$ and $k$ (for $n=1$ also the former case occurs when $Z(X)$ is a rational normal $(k+1)$--dimensional variety of degree $r-k$ in $\Proj^r$). In this  section we provide two examples showing that for $n\geq 2$ also the former case can occur. 

\begin{example}\label{ex:uno}

Fix positive integers $n,k$ and set $r=n+k-1$. In $\Proj^r$, fix a $k$--linear space $\Pi$ and inside it a $(k-2)$--linear space $\pi$. Then let $X$ be the subvariety of $\G(k,r)$ described by all points corresponding to $k$--subspaces of $\Proj^r$ intersecting $\Pi$ in a subspace of dimension at least $k-1$ containing $\pi$. It is easy to check that $\dim(X)=n$, that it is Grassmann non--degenerate and that it does not present the cone case. Moreover $X$ is a cone with vertex the point corresponding to  $\Pi$. Let us  intersect $X$ with a hyperplane $H$ not containing $\Pi$. To do this, let us fix a general linear space $\Pi'$ of dimension $r-k-1=n-2$. Then $\Pi'$ does not intersect $\pi$ and we can consider the hyperplane section $H_{\Pi'}$ of $\G(k,n)$ consisting of all points corresponding to linear spaces of dimension $k$ intersecting $\Pi'$. The points in the intersection $X\cap H_{\Pi'}$ can be obtained in the following way. Take any $(k-1)$--space $P$ in $\Pi$ containing $\pi$, take a  point $p\in \Pi'$, take $\Pi_{P,p}:=\langle P,p\rangle$ and consider it as a point of $\G(k,r)$. One has
$$X\cap H_{\Pi'}=\bigcup _{\pi\subset P\subset \Pi, p\in \Pi'}\Pi_{P,p}.$$
Now the subspaces $P$ such that $\pi\subset P\subset \Pi$ vary in a $\Proj^1$, and $p\in \Pi'$ varies in a $\Proj^{n-2}$. An easy explicit computation shows that $X\cap H_{\Pi'}$ is isomorphic to the Segre variety ${\rm Seg}(1,n-2)\cong \Proj^1\times \Proj^{n-2}$ which has degree $n-1$ and spans a linear space of dimension $2n-3$. Hence $X$ has degree $n-1$ and spans a linear space of dimension $2n-2$. Thus $X$ is a rational normal variety of dimension $n$ and degree $n+1$ contained in $\G(k,n)$.
\end{example}

\begin{example}\label{ex:due}
Fix positive integers $n,k$ and set $r=k+4$. First consider the case $k=2h+1$ is odd. Then fix two skew subspaces $\Pi,\Pi'$ of dimension $h+2$ in $\Proj^r$ and inside $\Pi,\Pi'$  fix two $(h-1)$--subspaces $\pi, \pi'$ respectively. For any $h$--linear space $P$ [resp. $P'$] contained in $\Pi$ [resp. contained in $\Pi'$] and containing $\pi$ [resp. containing $\pi'$], consider $\Pi_{P,P'}=\langle P,P'\rangle$, that has dimension $2h+1=k$, so we can consider it as a point in $\G(k,r)$. The subspaces $P$ [resp. $P'$] contained in $\Pi$ [resp. contained in $\Pi'$] and containing $\pi$ [resp. containing $\pi'$] vary in a $\Proj^2$, hence
$$
V=\bigcup_{\pi\subset P\subset \Pi, \pi'\subset P'\subset \Pi'}\Pi_{P,P'}
$$
is a subvariety of $\G(k,r)$ which is easy to check to be isomorphic to the Segre variety ${\rm Seg}(2,2)\cong \Proj^2\times \Proj^2$ embedded in $\Proj^8$. As well known,  ${\rm Seg}(2,2)$ contains a Veronese surface $X\subset \Proj^5$, as the image of the diagonal of $\Proj^2\times \Proj^2$. The surface $X$ is a rational normal surface contained in $\G(k,n)$ which is Grassman non--degenerate and not presenting the cone case.

Consider now the case  $k=2h+2$ (and still $r=k+4$).  Now fix two skew subspaces $\Pi,\Pi'$ of dimension $h+2$ in $\Proj^r$ and a point $p$ off the hyperplane of $\Proj^r$ spanned by $\Pi$ and $\Pi'$. As in the odd case, inside $\Pi,\Pi'$ we fix two $(h-1)$--subspaces $\pi, \pi'$ respectively. For any $h$--linear space $P$ [resp. $P'$] contained in $\Pi$ [resp. contained in $\Pi'$] and containing $\pi$ [resp. containing $\pi'$], consider $\Pi_{P,P'}=\langle P,P',p\rangle$, that has dimension $k$, so we can consider it as a point in $\G(k,r)$. The subspaces $P$ [resp. $P'$] contained in $\Pi$ [resp. contained in $\Pi'$] and containing $\pi$ [resp. containing $\pi'$] vary in a $\Proj^2$, hence
$$
V=\bigcup_{\pi\subset P\subset \Pi, \pi'\subset P'\subset \Pi'}\Pi_{P,P'}
$$
is a subvariety of $\G(k,r)$ which is isomorphic to the Segre variety ${\rm Seg}(2,2)$. As in the odd case we see that $\G(k,r)$  contains a Veronese surface $X\subset \Proj^5$, that is a rational normal surface which is Grassman non--degenerate and not presenting the cone case.

\end{example}

\end{document}